\newtheorem{lemma}{Lemma}
\def\Atop#1#2{\genfrac{}{}{0pt}{}{#1}{#2}}
\def\F#1#2#3#4{{}_{#1}F_{#2}\left(#3\biggm|#4\right)}
\def\r#1#2{(#1)_{#2}}
\def\half{\tfrac12}
\begin{document}
\title[A simple proof of Andrews's ${}_5F_4$ evaluation]{A simple proof of Andrews's ${}_5F_4$ evaluation}
\author{Ira M. Gessel}

\address{Department of Mathematics\\
   Brandeis University\\
   Waltham, MA 02453\\
   {\tt gessel@brandeis.edu}}
%\date{March 6, 2013}
\thanks{This work was partially supported by a grant from the Simons Foundation (\#229238 to Ira Gessel).}
\keywords{hypergeometric series evaluation, balanced $_5F_4$}
\subjclass[2010]{33C20}

\begin{abstract}
We give a simple proof of George Andrews's balanced ${}_5F_4$ evaluation using two fundamental principles: the $n$th difference of a polynomial of degree less than $n$ is zero, and a polynomial of degree $n$ that vanishes at $n+1$ points is identically zero.
\end{abstract}

\maketitle

\section{Introduction}
George Andrews \cite{pfaff1}, in his evaluation of the Mills-Robbins-Rumsey determinant, needed the balanced $_5F_4$ evaluation
\begin{equation}
\label{e-a1}
\F54{\Atop{-2m-1, x+2m+2, x-z+\half,x+m+1, z+m+1}
{\half x+\half, \half x+1, 2z+2m+2, 2x-2z+1}
}1=0,
\end{equation}
where $m$ is a nonnegative integer. Here the hypergeometric series is defined by
\begin{equation*}
\F pq{\Atop{a_1,\dots, a_p}{b_1,\dots, b_q}}t 
= \sum_{k=0}^\infty \frac{(a_1)_k\cdots(a_p)_k}{k!\,(b_1)_k\cdots(b_q)_k} t^k
\end{equation*}
and $(a)_k$ is the rising factorial $a(a+1)\cdots (a+k-1)$.
Andrews's proof of \eqref{e-a1} used Pfaff's method, and required a complicated induction that proved 20 related identities. Andrews  later discussed these identities and Pfaff's method in comparison with the WZ method \cite{pfaff3}, and a proof of \eqref{e-a1} using the Gosper-Zeilberger algorithm was given by Ekhad and Zeilberger \cite{syndrome}.
A completely different proof of \eqref{e-a1} was given by Andrews and Stanton \cite{a-s}. Generalizations of \eqref{e-a1}, proved using known transformations for hypergeometric series, have been given by Stanton \cite{stanton}, Chu \cite{chu}, and Verma, Jain, and Jain \cite{verma}.

We give here a simple self-contained proof of Andrews's identity, by using two fundamental principles: first, the $n$th difference of a polynomial of degree less than $n$ is~0, and second, a polynomial of degree $n$ that vanishes at $n+1$ points is identically 0.

To illustrate the method, we first use it to prove the Pfaff-Saalsch\"utz identity. We then prove Andrews's identity.

\section{Lemmas}

We first give two lemmas that we will need later on. Although they are well known, for completeness we include the short proofs.

\begin{lemma}
\label{diff}
If $p(k)$ is a polynomial of degree less than $n$ then
\[\sum_{k=0}^n (-1)^k \binom nk p(k) = 0.\]
\end{lemma}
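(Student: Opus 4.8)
The plan is to recognize the alternating sum as an $n$th finite difference. Introduce the forward difference operator $\Delta$, where $(\Delta f)(x) = f(x+1) - f(x)$; a routine induction on $n$ gives $(\Delta^n f)(x) = \sum_{k=0}^n (-1)^{n-k}\binom{n}{k} f(x+k)$, so that $\sum_{k=0}^n (-1)^k \binom{n}{k} p(k) = (-1)^n (\Delta^n p)(0)$. It therefore suffices to show that $\Delta^n p$ is the zero polynomial whenever $\deg p < n$. The one thing to check is that $\Delta$ strictly lowers degree: if $p$ has degree $d \ge 1$ with leading term $c x^d$, then in $p(x+1) - p(x)$ the $x^d$ terms cancel and the leading term becomes $cd\, x^{d-1}$, while $\Delta$ kills constants outright. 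Applying $\Delta$ a total of $n$ times to a polynomial of degree less than $n$ thus produces $0$, and evaluating at $x = 0$ completes the argument.

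A variant that avoids the difference operator altogether is to reduce to a convenient basis. By linearity it is enough to verify the identity for $p(k) = \binom{k}{j}$ with $0 \le j \le n-1$, since these polynomials span the space of polynomials of degree less than $n$. Using the identity $\binom{n}{k}\binom{k}{j} = \binom{n}{j}\binom{n-j}{k-j}$ and re-indexing with $i = k-j$, the sum collapses to $(-1)^j\binom{n}{j}\sum_{i=0}^{n-j}(-1)^i\binom{n-j}{i} = (-1)^j \binom{n}{j}(1-1)^{n-j}$, which is $0$ precisely because $j \le n-1$ forces $n - j \ge 1$.

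I do not expect any real obstacle. The only place needing a moment's care is the degree-drop property of $\Delta$ in the first approach (or, equivalently, the inequality $n - j \ge 1$ in the second), and each is a one-line verification. I would present whichever version sits more comfortably next to the other lemma; the finite-difference phrasing has the mild advantage of matching the description ``the $n$th difference of a polynomial of degree less than $n$ is $0$'' already given in the introduction.
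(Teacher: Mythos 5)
Both of your arguments are correct. Your second variant (reducing by linearity to the basis $\binom kj$, applying $\binom nk\binom kj=\binom nj\binom{n-j}{k-j}$, and invoking the binomial theorem with the observation that $n-j\ge 1$) is exactly the proof the paper gives. Your first variant, via the forward difference operator $\Delta$ and the degree-drop property, is a standard equivalent route; it buys a cleaner conceptual statement ($\Delta^n$ annihilates polynomials of degree less than $n$) at the cost of an induction to expand $\Delta^n$, whereas the basis argument gets the vanishing in one line from $(1-1)^{n-j}=0$. Either version would serve; no gaps in either.
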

\begin{proof}
Since the polynomials $\binom ki$ form a basis for the vector space of all polynomials in $k$,  it suffices by linearity to show that if $i<n$ then $\sum_{k=0}^n (-1)^k \binom nk \binom ki = 0$.
But
\begin{align*}
\sum_{k=0}^n (-1)^k \binom nk \binom ki &=
(-1)^i\binom ni \sum_{k=i}^n (-1)^{k-i} \binom{n-i}{k-i}=(-1)^i\binom ni (1-1)^{n-i}=0,
\end{align*} 
by the binomial theorem.
\end{proof}

\begin{lemma}
\label{facs}
If $\alpha-\beta=d$ is a nonnegative integer, then $\r \alpha k/\r \beta k$, as a function of $k$, is a polynomial of degree $d$.
\end{lemma}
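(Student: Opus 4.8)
The plan is to reduce the ratio $\r\alpha k/\r\beta k$ to a single rising factorial in $k$ by using the elementary ``splitting'' identity $\r\gamma{m+n}=\r\gamma m\,\r{\gamma+m}n$, which follows at once from the definition of the rising factorial as a product of consecutive terms $\gamma,\gamma+1,\dots,\gamma+m+n-1$.

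First I would apply this identity to $\r\beta{k+d}$ in two different ways: grouping the first $k$ factors together gives $\r\beta{k+d}=\r\beta k\,\r{\beta+k}d$, while grouping the first $d$ factors together gives $\r\beta{k+d}=\r\beta d\,\r{\beta+d}k=\r\beta d\,\r\alpha k$, since $\alpha=\beta+d$. Equating the two expressions and dividing by $\r\beta k\,\r\beta d$ yields
\[
\frac{\r\alpha k}{\r\beta k}=\frac{\r{\beta+k}d}{\r\beta d}.
\]
Now the numerator $\r{\beta+k}d=(\beta+k)(\beta+k+1)\cdots(\beta+k+d-1)$ is visibly a monic polynomial in $k$ of degree $d$, and the denominator $\r\beta d$ is a constant independent of $k$, so the right-hand side is a polynomial in $k$ of degree exactly $d$, as claimed.

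The argument is essentially a one-line computation, so I do not expect any real obstacle; the only points needing a word of care are that $\r\beta d\ne0$ (so that the division is legitimate and the degree is genuinely $d$, not smaller), and the convention $\r\gamma0=1$ covering the edge cases $k=0$ or $d=0$. One could equally well prove the displayed identity by directly cancelling the common factors $\beta+j$ in the quotient $\r\alpha k/\r\beta k$, but the splitting identity makes the bookkeeping cleaner and treats the ranges $k<d$ and $k\ge d$ uniformly.
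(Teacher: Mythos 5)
Your proof is correct and is essentially identical to the paper's: both split $\r\beta{k+d}$ two ways via $\r u{i+j}=\r ui\r{u+i}j$ to obtain $\r\alpha k/\r\beta k=\r{\beta+k}d/\r\beta d$, a polynomial of degree $d$ in $k$. Your added remark about needing $\r\beta d\neq 0$ is a reasonable point of care that the paper leaves implicit.
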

\begin{proof}
We first note the formula
\[\r u{i+j} = \r ui \r{u+i}j,\]
which we will also use later. 
Then
\[\r \beta d\frac{\r \alpha k}{\r \beta k} = \frac {\r \beta d \r{\beta +d}k}{\r \beta k} 
=\frac{\r \beta {d+k}}{\r \beta k} = \r{\beta +k}d. \qedhere
\]
\end{proof}

We shall also use the fact that a polynomial of degree at most $d$ is determined by its value at $d+1$ points, or by its leading coefficient and its value at $d$ points.

\section{The Pfaff-Saalsch\"utz identity}

As a warm-up we give a proof of the Pfaff-Saalsch\"utz identity
\begin{equation}
\label{e-pfaff}
\F32{\Atop{-m,\ a,\ b}{c, 1-m+a+b-c}}1 = \frac{\r{c-a}m \r{c-b}m}{\r cm \r{c-a-b}m}.
\end{equation}
We assume that $a-b$ is not an integer; it is easy to see that the identity with this restriction  implies the general case.
First we show that the left side of \eqref{e-pfaff} vanishes if $c-a\in \{0, -1, \dots, -(m-1)\}$. With $c-a=-i$, we may write the left side of \eqref{e-pfaff} as
\begin{equation}
\label{e-psum}
\sum_{k=0}^m (-1)^k \binom mk \frac{\r {c+i}k}{\r ck} \frac{\r bk}{\r {1-m+i+b}k}.
\end{equation}
By Lemma \ref{facs}, 
\[\frac{\r {c+i}k}{\r ck} \frac{\r bk}{\r {1-m+i+b}k}\]
is a polynomial in $k$ of degree $i + (m-i-1) = m-1$, so by Lemma \ref{diff}, the sum \eqref{e-psum} vanishes.
By symmetry, \eqref{e-psum} also vanishes if $c-b\in \{0, -1, \dots, -(m-1)\}$.

Multiplying the left side of \eqref{e-pfaff} by $\r cm \r {c-a-b}m$ and simplifying gives
\begin{multline}
\label{e-saalpoly}
\qquad \r cm \r {c-a-b}m\,
\F32{\Atop{-m,\ a,\ b}{c, 1-m+a+b-c}}1 \\
=
\sum_{k=0}^m  \binom mk \r ak \r bk \r{c+k}{m-k} \r{c-a-b}{m-k}.\qquad
\end{multline}
Then \eqref{e-saalpoly} is a monic polynomial in $c$ of degree $2m$ that  vanishes for the $2m$ distinct (since $a-b$ is not an integer) values $c=a-i$ and $c=b-i$, for $i\in \{0, 1, \dots m\}$. Thus \eqref{e-saalpoly} is equal to $(c-a)_m(c-b)_m$.
\smallskip

We note that the sum in the Pfaff-Saalsch\"utz theorem is \emph{balanced}; that is, the sum of the denominator parameters is one more than the sum of the numerator parameters. It is not hard to show that if a balanced hypergeometric series can be expressed in the form 
\[\sum_{k=0}^m (-1)^k\binom mk p(k),\]
where $p(k)$ is a polynomial in $k$, then $p(k)$ must have degree $m-1$, and thus the sum vanishes by Lemma \ref{diff}. For this reason, our method is especially applicable to balanced summation formulas. 

\section{Andrews's Identity}
To prove \eqref{e-a1}, we start by making the substitution $x=y+2z$, obtaining the equivalent identity 
\begin{equation}
\label{e-a2}
\F54{\Atop{-2m-1, y+2z+2m+2, y+z+\half,y+2z+m+1, z+m+1}
{\half y+z+\half, \half y+z+1, 2z+2m+2, 2y+2z+1}
}1=0.
\end{equation}
We shall first show that \eqref{e-a2} holds when $y\in \{0,1,\dots, 2m+1\}$ by applying Lemma~\ref{diff}. We will then derive the general result by expressing the sum as a polynomial in $y$ of degree $2m$.

\begin{lemma}
\label{adiff}
Formula \eqref{e-a2} holds for $y\in \{0,1,\dots, 2m+1\}$.
\end{lemma}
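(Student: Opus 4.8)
The plan is to prove the lemma one value of $y$ at a time. Fix an integer $j\in\{0,1,\dots,2m+1\}$ and put $y=j$ in \eqref{e-a2}. Because of the parameter $-2m-1$, the resulting ${}_5F_4$ terminates and has exactly the $2m+2$ terms $k=0,1,\dots,2m+1$; I would show it can be written as $\sum_{k=0}^{2m+1}(-1)^k\binom{2m+1}{k}p(k)$ with $p$ a polynomial in $k$ of degree less than $2m+1$, so that Lemma~\ref{diff}, applied with $n=2m+1$, makes it vanish.

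Since $\r{-2m-1}k/k!=(-1)^k\binom{2m+1}{k}$, the $k$-th term of the series is $(-1)^k\binom{2m+1}{k}p(k)$ with
\[
p(k)=\frac{\r{j+2z+2m+2}k\,\r{j+z+\half}k\,\r{j+2z+m+1}k\,\r{z+m+1}k}
{\r{\half j+z+\half}k\,\r{\half j+z+1}k\,\r{2z+2m+2}k\,\r{2j+2z+1}k}.
\]
The heart of the proof is to check that $p(k)$ is indeed a polynomial and to find its degree, and I would do this by matching each denominator Pochhammer symbol to a distinct numerator one so that in each pair the numerator parameter exceeds the denominator parameter by a nonnegative integer, then quoting Lemma~\ref{facs}. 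Match $\{\r{2z+2m+2}k,\ \r{2j+2z+1}k\}$ with $\{\r{j+2z+2m+2}k,\ \r{j+2z+m+1}k\}$: when $j\le m$ the differences are $j$ and $m-j$, and when $j\ge m+1$ they are $j-m-1$ and $2m+1-j$; either way they sum to $m$. Match $\{\r{\half j+z+\half}k,\ \r{\half j+z+1}k\}$ with $\{\r{j+z+\half}k,\ \r{z+m+1}k\}$: when $j$ is even the differences are $\half j$ and $m-\half j$, and when $j$ is odd they are $\half(2m+1-j)$ and $\half(j-1)$; again they sum to $m$. As $0\le j\le 2m+1$, all four differences are nonnegative integers, so by Lemma~\ref{facs}, $p(k)$ is a product of four polynomials whose degrees add up to $2m$.

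Thus $p$ has degree $2m<2m+1$, and Lemma~\ref{diff} gives $\sum_{k=0}^{2m+1}(-1)^k\binom{2m+1}{k}p(k)=0$, which is \eqref{e-a2} with $y=j$; here one treats $z$ as generic so that no denominator Pochhammer symbol vanishes, and the resulting identity among rational functions of $z$ then holds wherever it is defined. (Computing the degree explicitly is in fact optional: since \eqref{e-a2} is balanced, the remark at the end of the previous section forces $p$ to have degree exactly $2m$ as soon as one knows it is a polynomial.) The only step that takes any care --- and the one I expect to be the main, though still elementary, obstacle --- is the parameter matching, with its split into the cases $j\le m$ versus $j\ge m+1$ and $j$ even versus $j$ odd; everything else is immediate from Lemmas~\ref{diff} and~\ref{facs}.
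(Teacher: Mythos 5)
Your proposal is correct and follows essentially the same route as the paper: write the $k$-th term as $(-1)^k\binom{2m+1}{k}p(k)$, pair each denominator Pochhammer symbol with a numerator one (splitting into the cases $y\le m$ versus $y\ge m+1$ and $y$ even versus $y$ odd, exactly as the paper does), apply Lemma~\ref{facs} to see that $p$ is a polynomial of degree $2m$, and conclude with Lemma~\ref{diff}. The pairings and degree counts you give all check out.
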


\begin{proof}
We write the sum in \eqref{e-a2} as
\begin{equation*}
\sum_{k=0}^{2m+1}(-1)^k\binom {2m+1}k  P_1(k) P_2(k),
\end{equation*}
where 
\[P_1(k) = \frac{\r {y+2z+2m+2}k \r{y+2z+m+1}k} {\r{2z+2m+2}k \r{2y+2z+1}k}\]
and
\[P_2(k) = \frac{\r{y+z+\half}k \r{z+m+1}k} {\r {\half y+z+\half}k \r{\half y+z+1}k}.\]
\end{proof}
It will suffice to show that for each $y\in \{0,1,\dots, 2m+1\}$, $P_1(k)$ and $P_2(k)$ are polynomials in $k$. We  do this by pairing up the numerator and denominator factors in $P_1(k)$ and $P_2(k)$ so that Lemma \ref{facs} applies.

For $0\le y\le m$ we use
\[P_1(k) = \frac{\r {y+2z+2m+2}k}{\r{2z+2m+2}k}\cdot \frac{\r{y+2z+m+1}k}{\r{2y+2z+1}k},\]
and for $m+1\le y\le 2m+1$, we use
\[P_1(k) =  \frac{\r {y+2z+2m+2}k}{\r{2y+2z+1}k}\cdot \frac{\r{y+2z+m+1}k}{\r{2z+2m+2}k}.\]
For $y$ even, we use
\[P_2(k) = \frac{\r{y+z+\half}k}{\r{\half y +z +\half}k}
 \cdot \frac{\r{z+m+1}k}{\r{\half y+z+1}k},\]
 and for $y$ odd we use
 \[P_2(k) = \frac{\r{y+z+\half}k}{\r{\half y+z+1}k}
 \cdot \frac{\r{z+m+1}k}{\r{\half y +z +\half}k}.\]
It is easily checked that Lemma \ref{facs} applies in all cases. So for each $y$, $P_1(k)P_2(k)$ is a polynomial in $k$ of degree $2m$, and the result follows from Lemma \ref{diff}.

\begin{lemma}
\label{poly} The series in \eqref{e-a2}, after multiplication by $\r{y+z+1}m \r{y+2z+1}m$, is a polynomial in $y$ of degree at most $2m$.
\end{lemma}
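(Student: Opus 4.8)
The plan is to prove the lemma term by term. Write the series in \eqref{e-a2} as $\sum_{k=0}^{2m+1}(-1)^k\binom{2m+1}{k}t_k$, where $t_k=P_1(k)P_2(k)$ with $P_1,P_2$ as in Lemma~\ref{adiff}, and set $D(y)=\r{y+z+1}m\,\r{y+2z+1}m$. Since a finite sum of polynomials in $y$ of degree at most $2m$ is again such a polynomial, it will suffice to prove that $D(y)\,t_k$ is a polynomial in $y$ of degree at most $2m$ for every $k$ with $0\le k\le 2m+1$.

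First I would rewrite $t_k$ so that all of its $y$-dependence is displayed as a single quotient of rising factorials. By the duplication formula $\r ak\,\r{a+\half}k=4^{-k}\r{2a}{2k}$ with $a=\half y+z+\half$, the product $\r{\half y+z+\half}k\,\r{\half y+z+1}k$ in the denominator of $P_2(k)$ equals $4^{-k}\r{y+2z+1}{2k}$, so that
\[
t_k=\frac{4^k\,\r{y+2z+2m+2}k\,\r{y+2z+m+1}k\,\r{y+z+\half}k\,\r{z+m+1}k}
{\r{2z+2m+2}k\,\r{2y+2z+1}k\,\r{y+2z+1}{2k}}.
\]
The factors $\r{z+m+1}k$ and $\r{2z+2m+2}k$ do not involve $y$, so $D(y)\,t_k$ equals $A_kB_k$ times a factor not involving $y$, where
\[
A_k=\frac{\r{y+z+\half}k\,\r{y+z+1}m}{\r{2y+2z+1}k}\qquad\text{and}\qquad
B_k=\frac{\r{y+2z+2m+2}k\,\r{y+2z+m+1}k\,\r{y+2z+1}m}{\r{y+2z+1}{2k}}.
\]
The main obstacle is to show that $A_k$ and $B_k$ are polynomials in $y$; this is essentially a bookkeeping problem of matching the numerator and denominator rising-factorial factors, and is the step I expect to need the most care.

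For $A_k$ I would group the $k$ factors of $\r{2y+2z+1}k=\prod_{i=0}^{k-1}(2y+2z+1+i)$ according to the parity of $i$, obtaining
\[
\r{2y+2z+1}k=2^k\,\r{y+z+\half}{\lceil k/2\rceil}\,\r{y+z+1}{\lfloor k/2\rfloor};
\]
this divides the numerator $\r{y+z+\half}k\,\r{y+z+1}m$ of $A_k$ because $\lceil k/2\rceil\le k$ and $\lfloor k/2\rfloor\le m$, the latter being exactly the hypothesis $k\le 2m+1$. For $B_k$ I would first use the identity $\r u{i+j}=\r ui\,\r{u+i}j$ from the proof of Lemma~\ref{facs} (with $u=y+2z+1$, $i=m$, $j=k$) to combine $\r{y+2z+1}m\,\r{y+2z+m+1}k$ into $\r{y+2z+1}{m+k}$, so that it remains to check that $\r{y+2z+1}{2k}$ divides $\r{y+2z+1}{m+k}\,\r{y+2z+2m+2}k$. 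If $k\le m$ it divides the first factor outright; if $k>m$ one peels $\r{y+2z+1}{m+k}$ off $\r{y+2z+1}{2k}$ and verifies that the remaining block of $k-m$ consecutive factors lies inside the $k$ consecutive factors of $\r{y+2z+2m+2}k$, the two endpoint inequalities again coming down to $k\le 2m+1$. Thus it is precisely the summation range $0\le k\le 2m+1$ that makes both divisibilities hold.

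Once $A_k$ and $B_k$ are known to be polynomials, the degree bound is immediate: a polynomial quotient has degree equal to the degree of its numerator minus that of its denominator, so $A_k$ has degree $(k+m)-k=m$ and $B_k$ has degree $(2k+m)-2k=m$; hence $D(y)\,t_k$, being $A_kB_k$ times a $y$-independent factor, has degree $2m$. Summing over $k$ shows that the series in \eqref{e-a2}, multiplied by $\r{y+z+1}m\,\r{y+2z+1}m$, is a polynomial in $y$ of degree at most $2m$.
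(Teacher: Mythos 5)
Your proof is correct and follows essentially the same route as the paper: after clearing the half-integer denominator parameters with the duplication formula, you split $D(y)t_k$ into the same two factors (your $A_k$ and $B_k$ are the paper's $Q_1$ and, up to the constant $4^k$, $Q_2$) and show each is a polynomial in $y$ of degree $m$. The only cosmetic difference is that for $A_k$ you handle both parities of $k$ at once via the floor/ceiling splitting of $\r{2y+2z+1}{k}$ instead of the paper's case distinction $k\le m$ versus $k\ge m+1$.
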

\begin{proof} 
We show that each term in the sum, when multiplied by $\r{y+z+1}m \r{y+2z+1}m$, is a polynomial in $y$ of degree at most $2m$. Ignoring factors that do not contain $y$, we see that we must show that for $0\le k\le 2m+1$, 
\begin{equation*}
 \r{y+z+1}m \r{y+2z+1}m \frac{\r{y+2z+2m+2}k \r{y+z+\half}k \r{y+2z+m+1}k}
 {\r{\half y+z+\half}k \r{\half y+z+1}k \r{2y+2z+1}k}
\end{equation*}
is a polynomial in $y$ of degree at most $2m$.
To do this we define
\begin{equation}
\label{e-yz}
Q_1(y)= \r{y+z+1}m \frac{\r{y+z+\half}k }
 {\r{2y+2z+1}k}
\end{equation}
and 
\begin{equation}
\label{e-y2z}
Q_2(y)=\r{y+2z+1}m \frac{\r{y+2z+2m+2}k \r{y+2z+m+1}k}
 {\r{\half y+z+\half}k \r{\half y+z+1}k }
 \end{equation}
and we show that $Q_1(y)$ and $Q_2(y)$ are both polynomials in $y$ of degree $m$.
We will use the formulas
\begin{align*}
\r a{2n} &= 2^{2n}\r{\half a}n \r{\half a+\half}n, \\
\r a{2n+1}&=2^{2n+1}\r{\half a}{n+1}\r{\half a + \half}n.
\end{align*}

For $k\le m$, we have 
\begin{align*}
Q_1(y) &= \r{y+z+1}k \r{y+z+1+k}{m-k} 
\frac{\r{y+z+\half}k} {\r{2y+2z+1}k}\\
  &=2^{-2k}\r{y+z+1+k}{m-k}
     \frac{\r{2y+2z+1}{2k}}{\r{2y+2z+1}k} \\
  &=2^{-2k}\r{y+z+1+k}{m-k}
    \r{2y+2z+1+k}{k},
\end{align*}
and for $m+1\le k\le 2m+1$ we have
\begin{align*}
Q_1(y)&= \r{y+z+1}m\frac{\r{y+z+\half}{m+1}\r{y+z+m+\tfrac32}{k-m-1}}
{\r{2y+2z+1}k}\\
   &= 2^{-2m-1}\frac{\r{2y+2z+1}{2m+1}}{\r{2y+2z+1}k}\r{y+z+m+\tfrac32}{k-m-1}\\
   &=2^{-2m-1}\r{2y+2z+1+k}{2m+1-k}\r{y+z+m+\tfrac32}{k-m-1},
\end{align*}
so in both cases, $Q_1(y)$ is a polynomial in $y$ of degree $m$.

We have
\begin{equation*}
Q_2(y) = 2^{2k} \frac{\r{y+2z+1}{m+k}\r{y+2z+2m+2}k}{\r{y+2z+1}{2k}}.
\end{equation*}
For $k\le m$ we have
\[Q_2(y)=2^{2k}\r{y+2z+1+2k}{m-k}\r{y+2z+2m+2}k.\]
For $m+1\le k\le 2m+1$, we have
\begin{align*}
Q_2(y) &= 2^{2k} \frac{ \r {y+2z+1}{m+k} \r {y+2z+2m+2}k}{\r{y+2z+1}{2k}}\\
  &=2^{2k} \frac{ \r {y+2z+1}{m+k}}{\r{y+2z+1}{2m+1}} \cdot \frac{\r{y+2z+1}{2m+1}   \r {y+2z+2m+2}k}{\r{y+2z+1}{2k}}\\
  &=2^{2k} \r {y+z+2m+2}{k-m-1} \frac{\r{y+2z+1}{2m+1+k}}{\r{y+2z+1}{2k}}\\
  &= 2^{2k} \r {y+z+2m+2}{k-m-1}\r{y+2z+1+2k}{2m+1-k}.
\end{align*}
Thus in both cases, $Q_2(y)$ is also a polynomial in $y$ of degree $m$.

As an alternative, we could have expressed $Q_1(y)$ and $Q_2(y)$ as rising factorials with respect to $y$, 
\begin{align*}
Q_1(y) &=  C_1\frac{\r{z+m+1}y \r{z+k+\half}y}{\r{z+\half k +\half}y \r{z+\half k +1}y}\\
Q_2(y) &= C_2 \frac{\r{2z+m+k+1}y \r{2z+2m+k+2}y}{\r {2z+2k+1}y \r{2z+2m+2}y},
\end{align*}
where $C_1$ and $C_2$ do not contain $y$,
and applied Lemma \ref{facs}.
\end{proof}

We can now finish the proof of \eqref{e-a2}. By Lemmas \ref{adiff} and \ref{poly}, $\r{y+z+1}m \r{y+2z+1}m$ times the sum in \eqref{e-a2} is a polynomial in $y$ of degree at most $2m$ that vanishes for $y=0, 1,\dots, 2m+1$. Therefore this polynomial is identically zero.

\end{document}